\title{Computing Severi Degrees with Long-edge Graphs}
\author[F.\ Block]{Florian Block}
\address{Department of Mathematics,
University of California, Berkeley, Berkeley, CA 94720, USA}
\email{block@math.berkeley.edu}
\author[S.\ J.\ Colley]{Susan Jane Colley}
\address{Department of Mathematics,
Oberlin College, Oberlin, Ohio 44074, USA}
\email{sjcolley@math.oberlin.edu}
\author[G.\ Kennedy]{Gary Kennedy}
\address{Ohio State University at Mansfield, 1680 University Drive,
Mansfield, Ohio 44906, USA}
\email{kennedy@math.ohio-state.edu}
\subjclass[2010]{Primary 14N10. Secondary 14T05, 14N35, 05A99.}
\newtheorem{thm}[equation]{Theorem}
\newtheorem{lem}[equation]{Lemma}
\newtheorem{prop}[equation]{Proposition}
\theoremstyle{definition}
\newtheorem{defn}[equation]{Definition}
\newtheorem{example}[equation]{Example}
\theoremstyle{remark}
\newtheorem{rem}[equation]{Remark}
\numberwithin{equation}{section}
\newcommand{\C}{\mathbb{C}}
\newcommand{\PP}{\mathbb{P}}
\newcommand{\SN}{\mathcal{N}}
\newcommand{\SNb}{\overline{\mathcal{N}}}
\newcommand{\SNbb}{\overline{\overline{\mathcal{N}}}}
\newcommand{\SQ}{\mathcal{Q}}
\newcommand{\SQb}{\overline{\mathcal{Q}}}
\newcommand{\SQbb}{\overline{\overline{\mathcal{Q}}}}
\newcommand{\SD}{\mathcal{D}}
\newcommand{\SP}{\mathcal{P}}
\newcommand{\calS}{\mathcal{S}}
\DeclareMathOperator{\ext}{ext}
\DeclareMathOperator{\Edge}{Edge}
\begin{document}

\begin{abstract}
We study a class of graphs with finitely many edges in order to understand the nature of the formal logarithm of the generating series for Severi degrees in elementary combinatorial terms.  These graphs are related to floor diagrams associated to plane tropical curves originally developed in \cite{BM} and used in \cite{Block} and \cite{FM} to calculate Severi degrees of $\PP^2$ and node polynomials of plane curves.
\end{abstract}

\maketitle

%
%
%
%
%

\section{Introduction} \label{intro}

\par
The motivating question for this article is classical and well-known, namely to determine the number $N^{d,\delta}$ of (possibly reducible) curves in $\PP^2_{\C}$ of degree $d$ having $\delta$ nodes and passing through $\dfrac{d(d+3)}{2} - \delta$ general points.   This number $N^{d,\delta}$ is the degree of the Severi variety.  When $d \ge \delta + 2$, the curves in question are irreducible, so that $N^{d,\delta}$ coincides with the Gromov--Witten invariant $N_{d,g}$, where $g = \dfrac{(d-1)(d-2)}{2} - \delta$.     

\par
Despite its long history, there continues to be interest in the Severi degree and much recent activity surrounding it.  In \cite{DiFI} Di~Francesco and Itzykson conjectured that $N^{d,\delta}$ is given by a \emph{node polynomial} $N_\delta(d)$ for sufficiently large $d$ and fixed $\delta$.  The polynomiality of $N^{d,\delta}$ became part of G\"ottsche's larger conjecture \cite[Conjecture 4.1]{G}---recently established by Tzeng in \cite{Tz} and independently by Kool, Shende, Thomas \cite{KST}---regarding the existence of universal polynomials enumerating curves on smooth projective surfaces.  The so-called threshold of polynomiality, i.e., the value $d^*$ such that the Severi degree $N^{d,\delta}$ is given by a polynomial for all $d \ge d^*$, has been steadily lowered.  In the proof of Theorem 5.1 of \cite{FM}, Fomin and Mikhalkin showed that $d^* \le 2\delta$; this was improved to $d^* \le \delta$ by the first author in \cite{Block}.  In the past year the bound for $d^*$ was sharpened still further to at most $\lceil \delta/2 \rceil +1$ (for $\delta \ge 3$) by Kleiman and Shende in \cite{KS}; this result establishes the threshold value conjectured by G\"ottsche in \cite{G}.

\par
In addition to knowing the value of $d$ that ensures that $N^{d,\delta}$ is given by a polynomial is, of course, the issue of determining the node polynomials exactly.  The node polynomials for the small numbers of nodes were known in the 19th century:
\begin{align*}
N_1(d) &= 3(d - 1)^2	 & &\quad\text{J.~Steiner (1848)} \\
N_2(d) &= \tfrac{3}{2}(d - 1)(d - 2)(3d^2 - 3d - 11)  & &\quad\text{A.~Cayley (1863)} \\
N_3(d) &= \tfrac{9}{2}d^6 - 27d^5 + \tfrac{9}{2}d^4 + \tfrac{423}{2}d^3 - 229d^2 - \tfrac{829}{2}d + 525
   & & \quad\text{S.~Roberts (1875)}   
\end{align*}
The node polynomials for $\delta = 4, 5, 6$ were obtained by Vainsencher in \cite{V}, for $\delta = 7, 8$ by Kleiman and Piene in \cite{KP}, and by the first author for $\delta \le 14$ in \cite{Block}.

\medskip
\par
We are particularly interested in the generating series for Severi degrees
\begin{equation} \label{Ngen}
    \SN(d)=\sum_{\delta \ge 0}N^{d,\delta} x^\delta    
\end{equation}
and its formal logarithm
\begin{equation} \label{Qgen}
      \SQ(d)=\log(\SN(d))=\sum_{\delta \ge 1}Q^{d,\delta} x^\delta .    
\end{equation}
Writing the coefficients of  $\SQ(d)$ explicitly,
\begin{equation} \label{explicit}
      Q^{d,\delta}
      =\sum\frac{(-1)^{p-1}}{p}\left(\prod_{i=1}^{p}N^{d,\delta_i}\right),   
\end{equation}
where the sum is over ordered partitions $\delta=\delta_1+\cdots+\delta_p$.  For $d$ sufficiently large and $\delta$ fixed, $N^{d, \delta}$ is given by a polynomial of degree $2\delta$.  Thus, \textit{a priori} one would expect $Q^{d,\delta}$ likewise to be a polynomial of degree $2\delta$.  However, $Q^{d,\delta}$ quite unexpectedly turns out to be \emph{quadratic}.  This is a consequence of the G\"ottsche--Yau--Zaslow Formula \cite[Conjecture~2.4]{G} (see also \cite{Qv1} and \cite{Qv2}), rather recently proved by Tzeng~\cite[Theorem~1.2]{Tz} using very sophisticated techniques. One goal of this paper is to establish the quadraticity of $Q^{d,\delta}$, for $d$ sufficiently large and fixed $\delta$, in an elementary combinatorial way.
\par
In Section \ref{leg} we describe what we call a \emph{long-edge graph}, the main combinatorial tool to determine Severi degrees. A long-edge graph is in fact nothing other than an ordered collection of \emph{templates}, as defined in \cite{FM} and \cite{Block}. They were used there to calculate Gromov--Witten invariants, Severi degrees, and node polynomials, but the perspective we take here is slightly different.  In Section \ref{laq}, we establish Theorem \ref{linearity}, which shows that a certain polynomial constructed from a long-edge graph is linear.  Then in Section \ref{templates} we discuss templates from scratch and see that the quadraticity of $Q^{d,\delta}$ follows, since it is a discrete integral of the linear polynomial of Theorem \ref{linearity}.  Finally, in Section \ref{tropical}, we explain how long-edge graphs arise from the tropical-geometric computation of Severi degrees, via the notion of \emph{floor diagrams}.
\par
One would hope to exploit the relationship between the quantities $N^{d,\delta}$  and $Q^{d,\delta}$
by inverting (\ref{Qgen}):
\begin{equation} \label{NisExpQ}
      \SN(d)=\exp(\SQ(d)).
\end{equation}
Explicitly, this gives
\begin{equation*}
      N^{d,\delta}
      =\sum\frac{1}{p!}\left(\prod_{i=1}^{p}Q^{d,\delta_i}\right),   
\end{equation*}
again summing over ordered partitions $\delta=\delta_1+\cdots+\delta_p$.  Knowing that the quantities $Q^{d,\delta}$ are quadratic in $\delta$ (and in fact obtained from certain linear quantities, as explained below), and that only templates need to be used, one should be able to efficiently calculate the Severi degrees.  What is needed is a way to calculate these quadratic quantities in some simple way from the graph-theoretic combinatorics laid out herein, rather than from the cumbersome definition (\ref{explicit}).  We intend to consider this problem further.

\par
While our formulas for $Q^{d, \delta}$ are evidently not positive, a natural question is to find an inherently positive formula for the $Q^{d, \delta}$. This would be very desirable, as it might give further insight in ``natural building blocks" of long-edge graphs and floor diagrams, in regard of identity (\ref{NisExpQ}).  We also note that, in \cite{Liu}, F.~Liu has recently and independently provided a combinatorial proof of the quadraticity of $Q^{d, \delta}$.

\par
We express appreciation to our colleagues Sergei Chmuntov, Kyungyong Lee, Boris Pittel, and Kevin Woods for their helpful comments and suggestions regarding this work.  Via the website MathOverflow, we received valuable insights into certain combinatorial issues, especially in postings by Will Sawin, Richard Stanley, Gjergji Zaimi, and David Speyer.  We thank Eduardo Esteves, Dan Edidin, Abramo Hefez, Ragni Piene, and Bernd Ulrich for arranging a most stimulating 12th ALGA Meeting and to IMPA for hosting it.  We are grateful to the referee for very useful comments that improved our exposition.  Finally, we offer our sincere gratitude to Steven Kleiman and Aron Simis for their many years of mathematical stimulation and guidance.

\section{Long-edge Graphs} \label{leg}

\par
Consider an edge-weighted multigraph $G$ on a vertex set indexed by the set of nonnegative integers $\{0,1,2,\dots\}$.   If $e$ is an edge between vertex $i$ and vertex $j$, we define the \emph{length} $l(e)$ of $e$ to be $l(e) = | i - j |$.  Denote the \emph{weight} of $e$ by $w(e)$. 

\begin{defn} \label{longedgedef}
An edge-weighted multigraph $G$ is a \emph{long-edge graph} if the following conditions hold:
\begin{enumerate}
\item  There are only finitely many edges.
\item  Multiple edges are permitted, but not loops.
\item  The weights are positive integers.
\item  The graph has no short edges, where a \emph{short edge} is an edge of length 1 and weight 1. (Thus all edges are \emph{long edges}.)
\end{enumerate}
\end{defn}

\noindent
We will draw long-edge graphs by arranging the vertices in order from left to right, with edges as segments or arcs drawn strictly from left to right, and indicating only the weights of 2 or more.  The \emph{multiplicity} $\mu$ of a long-edge graph $G$ is the product of the squares of the edge weights:
\[     \mu(G):=\prod w(e)^2.  \]
Its \emph{cogenus} is
\[     \delta(G):=\sum \left(l(e) \cdot w(e)-1\right),  \]
summing over all edges.  Our definition is inspired by the \emph{floor diagrams} of Brugall\'e and Mikhalkin \cite{BM} and Fomin and Mikhalkin's variant thereof \cite{FM}. We discuss the precise relationship in Section~\ref{tropical}.

\par
For each nonnegative integer $i$, let
\[    w_i :=\sum w(e),   \]
the sum taken over all edges lying over the interval $[i, i+1]$, i.e., edges beginning at or to the left of $i$, and ending at or to the right of $i+1$.  

\begin{defn} \label{allowdef}
Given a positive integer $d$, we say that a long-edge graph is \emph{allowable for $d$} if it satisfies these three criteria:
\begin{enumerate}
\item  All of the vertices to the right of vertex $d+1$ have degree zero.  (That is, there are no edges after vertex $d+1$.)
\item  All edges incident to vertex $d+1$, if any, have weight $1$.
\item  Each $w_i \leq i$.
\end{enumerate}
\end{defn}
Note that if a long-edge graph $G$ satisfies criterion (3) in Definition \ref{allowdef}, then there is some value of $d$ for which it is allowable, and that if $G$ is allowable for a particular value of $d$, it is allowable for all $d' > d$ as well.

\begin{example} \label{legex}
The long-edge graph $G$ shown in Figure \ref{Gexample} is allowable for all $d \ge 5$. Note that $\mu(G) = 4$ and $\delta(G) = 3$.  In addition, $w_i = 0$ for $0 \le i \le 2$, $w_3 = 1$, $w_4 = 4$, and $w_5 = 1$.  
\end{example} 
\begin{figure}[htbp]
  \centering
  \includegraphics{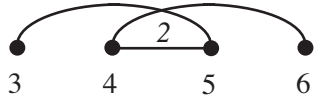} 
  \caption{The long-edge graph $G$ of Example \ref{legex}.}
  \label{Gexample}
\end{figure}

\par
If $G$ is allowable for $d$, then we obtain its \emph{extended graph} $\ext_{d}(G)$ by adding short edges to $G$ as follows:  for each $i \le d$, add $i-w_i$ such edges over the interval $[i, i+1]$.  Note that in $\ext_{d}(G)$ the number of edges over $[i, i+1]$ will be exactly $i$ (counting each edge with its multiplicity). If we subdivide each edge of $\ext_{d}(G)$ by introducing one new vertex, we obtain a graph which we denote by $G'_d$.  An \emph{ordering} of $G'_d$ is a linear ordering of its vertices that extends the ordering of the vertices $\{0,1,2,\dots\}$ of $G$. (See Figure \ref{GextGprime}.)
Two such orderings are considered \emph{equivalent} if there is an automorphism of $G'_d$ preserving the vertices of $G$. 

\par
If $G$ is allowable for $d$, then we define
\[   N^{d,G}=\mu(G) \cdot(\text{\# equivalence classes of orderings of $G'_d$}),    \]
remarking that this is independent of $d$ (as long as the graph is allowable for $d$).  If $G$ is not allowable for $d$, then let $N^{d,G}=0$.

\begin{example} \label{legexcontd}
The graphs $\ext_{5}(G)$ and $G'_5$ associated to the long-edge graph $G$ of Example \ref{legex} are shown in Figure \ref{GextGprime}.  In any ordering of $G'_5$ we require $3 < v< 5$ and $4 < w < 6$.  Thus there are $3\cdot 7 = 21$ (inequivalent) orderings if $3 < v < 4$; there are $2\cdot5 = 10$ orderings if $4 < v < 5$ and $5 < w < 6$; and there are $6$ orderings if both $v$ and $w$ are between vertices labeled $4$ and $5$.  Hence $N^{5,G} = 4(21 + 10+ 6) = 148$.
\end{example}
\begin{figure}[htbp]
  \centering
  \includegraphics{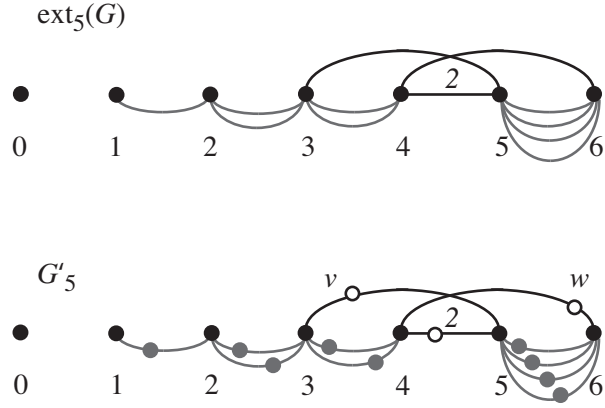} 
  \caption{The graphs $\ext_{5}(G)$ and $G'_5$ associated to the graph $G$ of Example \ref{legex} and Figure \ref{Gexample}.}
  \label{GextGprime}
\end{figure}

\par
The significance of the constructions above is that they enable a combinatorial calculation of the Severi degrees of $\PP^2$.
\begin{thm} \label{FMrecast}
The Severi degree may be computed as 
\[    N^{d,\delta} = \sum N^{d,G},    \]
where the sum is taken over all long-edge graphs of cogenus $\delta$.
\end{thm}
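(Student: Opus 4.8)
The quantity $N^{d,\delta}$ is defined geometrically, as the degree of a Severi variety, so any proof must pass through a combinatorial computation of that degree. The natural bridge is the tropical count of Fomin and Mikhalkin \cite{FM} (building on Brugall\'e and Mikhalkin \cite{BM}), which expresses $N^{d,\delta}$ as a weighted count of marked floor diagrams of degree $d$ and the appropriate genus. The plan is therefore to exhibit a weight-preserving correspondence between the long-edge graphs allowable for $d$ and the floor diagrams of degree $d$, under which the orderings of $G'_d$ recover exactly the Fomin--Mikhalkin markings. This correspondence, which I would develop in detail in Section~\ref{tropical}, reduces the theorem to their formula.

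First I would recall the Fomin--Mikhalkin formula in the form $N^{d,\delta} = \sum_D \mu(D)\,\nu(D)$, the sum ranging over floor diagrams $D$ of degree $d$ and cogenus $\delta$, where $\mu(D)$ is the product of the squares of the edge weights and $\nu(D)$ is the number of markings. The key observation is that the defining conditions of a degree-$d$ floor diagram amount precisely to the divergence requirement that the total edge weight crossing each interval $[i,i+1]$ equal $i$. Passing from $G$ to $\ext_d(G)$ inserts exactly $i - w_i$ short edges over each such interval, so that $\ext_d(G)$ satisfies this divergence condition and is a floor diagram of degree $d$; conversely, deleting from a degree-$d$ floor diagram its length-one, weight-one edges returns a long-edge graph. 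The three criteria of Definition~\ref{allowdef} are exactly what is needed for $\ext_d(G)$ to be a genuine degree-$d$ floor diagram, and when $G$ fails to be allowable the convention $N^{d,G}=0$ matches the absence of a corresponding floor diagram.

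Next I would check that this bijection is weight-preserving term by term. Since short edges have weight $1$, they contribute neither to the product of squared weights nor to the cogenus sum $\sum(l(e)w(e)-1)$; hence $\mu(G)=\mu(\ext_d(G))$ and $\delta(G)=\delta(\ext_d(G))=\delta$. It then remains to identify the two enumerations of ``markings.'' Subdividing each edge of $\ext_d(G)$ by a single midpoint and counting linear orderings of the resulting vertex set that extend the order on the vertices of $G$ is, after the routine reindexing, the same data as a Fomin--Mikhalkin marking; the equivalence of orderings under automorphisms of $G'_d$ fixing the vertices of $G$ corresponds to their identification of equivalent markings. This yields $N^{d,G} = \mu(D)\,\nu(D)$ for the associated floor diagram $D$, and summing over all long-edge graphs of cogenus $\delta$ reproduces the Fomin--Mikhalkin sum.

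The main obstacle is the last step: verifying that the equivalence classes of orderings of $G'_d$ coincide exactly with the count $\nu(D)$, including the correct treatment of the equivalence relation and of the boundary vertices at $d+1$ governed by criteria (1) and (2). This requires carefully aligning two \emph{a priori} different marking constructions and tracking the index shift between the vertex set $\{0,1,2,\dots\}$ used here and the floors $\{1,\dots,d\}$ of \cite{FM}; once that dictionary is fixed, the remaining verifications are the routine bookkeeping indicated above.
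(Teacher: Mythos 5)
Your proposal follows essentially the same route as the paper: the authors also prove Theorem~\ref{FMrecast} by citing the Fomin--Mikhalkin count $N^{d,\delta} = \sum \mu(\SD)\nu(\SD)$ over labeled floor diagrams (their Theorem~\ref{FMcount}) and setting up, in Section~\ref{tropical}, exactly the dictionary you describe---short edges are erased to pass from a floor diagram (with the auxiliary vertex $d+1$ adjoined) to a long-edge graph, $\ext_d$ reverses this, multiplicity and cogenus are checked to agree, and markings of $\SD$ are identified with orderings of $G'_d$. The step you flag as the main obstacle (matching markings with equivalence classes of orderings) is likewise the point the paper treats as a routine identification rather than proving in detail.
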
 

\noindent
Note that, for each pair $d$, $\delta$, only finitely many terms of the sum above are nonzero.  Theorem \ref{FMrecast} is essentially a recasting of \cite[Theorem 1.6, Corollary 1.9]{FM} and \cite[Theorem 3.6]{BM};  see Theorem \ref{FMcount} below.

\par
Although $N^{d,G}$ is the quantity which enters into Theorem \ref{FMrecast},
for purposes of calculation we often find it more convenient to
work with an ``automorphism-free'' and ``multiplicity-free'' quantity.
Suppose that the edges of $G$ have been labeled. Then  (in the allowable cases) we define
$N_{*}^{d,G}$ to be the  number of orderings of $G'_d$, so that
\[      N^{d,G}=\frac{\mu(G)}{\alpha(G)}N_{*}^{d,G},   \]
where $\alpha(G)$ is the number of automorphisms of $G$ when the edges are unlabeled.  (The vertices remain labeled, however.)  See Figure \ref{autofig} for an example.  Note that the short edges added to create $G'_d$ are considered to be unlabeled. Any of these short edges which lie completely to the left or right of the edges of $G$ are irrelevant in the calculation of $N^{d,G}$; going forward, therefore, we usually will not display such edges.
\begin{figure}[htbp]
   \centering
   \includegraphics{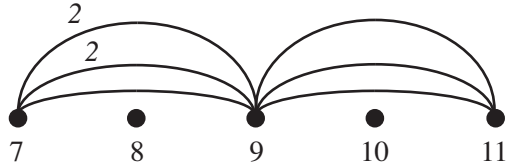} 
   \caption{For this graph $G$, we have $\mu(G) = 16$ and $\alpha(G) = 12$.}
   \label{autofig}
\end{figure}

\begin{example}\label{nd1}
We calculate $N^{d,1}$.  There are two types of long-edge graphs of cogenus one:  either the graph has a single edge of length $2$ and weight $1$, or a single edge of length $1$ and weight $2$.  They are shown in Figure \ref{CyclStub}; we call them the \emph{cyclops} and the \emph{stub}, respectively.  The cyclops $\text{Cyc}[k]$ has multiplicity $1$ and is allowable for $d$ if $1 \leq k \leq d-1$, while $\text{Stub}[k]$ has multiplicity $4$ and is allowable for $d$ if $2 \leq k \leq d-1$. There are no non-trivial automorphisms.
\begin{figure}[htbp]
   \centering
   \includegraphics{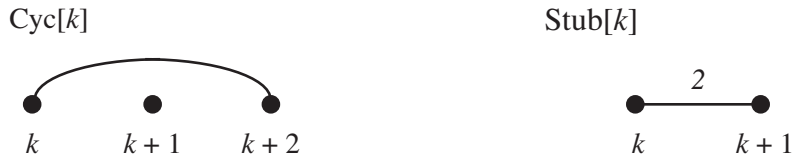} 
   \caption{The two types of long-edge graphs of cogenus $1$.}
   \label{CyclStub}
\end{figure}

\par
To obtain the extended graph $\ext_d(\text{Cyc}[k])$, we add $k-1$ short edges over the interval $[k, k+1]$, and $k$ short edges over $[k+1, k+2]$ (as well as irrelevant edges further to the left or right).  An ordering of $\text{Cyc}[k]'_d$ is determined by the position of the new vertex on the long edge, and there are $2k+1$ possible positions. (See Figure \ref{CyclStub2}.)  Similarly, $\ext_d(\text{Stub}[k])$ is obtained by adding $k-2$ short edges over $[k, k+1]$, and there are $k-1$ possible positions for the new vertex on the long edge.  Thus in the allowable cases we have
\[     N^{d,\text{Cyc}[k]} = 2k+1  \quad\text{and}\quad N^{d,\text{Stub}[k]} = 4(k-1).   \]
Hence
\[
N^{d,1} = \sum_{k=1}^{d-1} (2k+1) + \sum_{k=2}^{d-1} 4(k-1) = 3(d-1)^2. 
\]
\end{example}
\begin{figure}[htbp]
   \centering
   \includegraphics{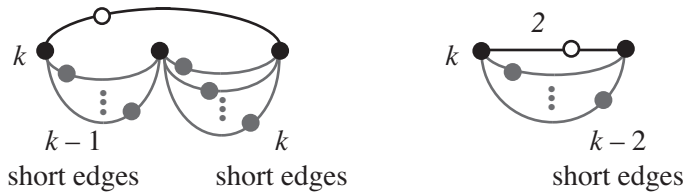} 
   \caption{Orderings of the long-edge graphs of cogenus $1$.}
   \label{CyclStub2}
\end{figure}
\par
To calculate $N^{d,G}$ for more complicated graphs, it is useful to work with  distributions of the new vertices on the long edges. A \emph{distribution} is a function $\Delta$ that associates, to each edge $e$ of $G$, one of the $l(e)$ intervals over which it lies.  We say that an ordering of $G'_d$ is \emph{consistent} with $\Delta$ if, in this ordering, each new long-edge vertex introduced by the subdivision process (as described above) lies within the interval specified by $\Delta$.  Let 
\begin{equation*}
N^{d,(G,\Delta)}=\mu(G) \cdot(\text{\# equivalence classes of orderings of $G'_d$ consistent with $\Delta$}),
\end{equation*} 
noting as before that this number is independent of $d$, as long as the graph is allowable for $d$.
Again we declare $N^{d,(G,\Delta)}=0$ when $G$ is not allowable for $d$.
Summing over all possible distributions, we have
\[    N^{d,G} = \sum_{\Delta} N^{d,(G,\Delta)}.   \]
\par
As above we often find it more convenient
to work with the automorphism- and multiplicity-free quantity 
$N_{*}^{d,(G,\Delta)}$, noting that
\[      N^{d,(G,\Delta)}=\frac{\mu(G)}{\alpha(G,\Delta)}N_{*}^{d,(G,\Delta)},   \]
where $\alpha(G,\Delta)$ is 
the number of automorphisms of $G$ consistent with $\Delta$.
Since the short edges added to create $G'_d$ are considered to be
unlabeled and therefore indistinguishable (when they lie over the same interval),
we have
\begin{equation} \label{falling}
N_{*}^{d,(G,\Delta)}=\prod_{i}(i-w_i+m_i)_{m_i},
\end{equation}
where $m_i$ is the number of times that $[i,i+1]$
appears as a value of $\Delta$, and
where $(i-w_i+m_i)_{m_i}$ indicates a falling factorial (i.e., $(a)_m = a(a-1)\cdots (a-m+1)$ and we take $(a)_0$ to be $1$).  The product in formula (\ref{falling}) is taken over all $i \ge 1$; however, all but finitely many factors have value $1$.

\par
If we translate a long-edge graph $G$ rightward by $k$ units, we obtain another long-edge graph $G[k]$, which we will call an \emph{offset} of $G$. In Example \ref{nd1}, the graphs $\text{Cyc}[k]$ and $\text{Stub}[k]$ are offsets of the graphs shown in Figure \ref{CyclStubtemp}, which we call the \emph{cyclops template} and the \emph{stub template}.  (The general notion of a template is explained in Section \ref{templates}.  The nomenclature originates with \cite{FM}.)
If $\Delta$ is a distribution of $G$, then $\Delta[k]$ is the distribution of $G[k]$ defined in the obvious way:  if $\Delta(e)=[i,i+1]$ then $\Delta(e[k])=[k+i,k+i+1]$.  Note that, for any $G$, we may choose a sufficiently large offset $k$ so that $G[k]$ satisfies criterion (3) of Definition \ref{allowdef}.
\begin{figure}[htbp]
   \centering
   \includegraphics{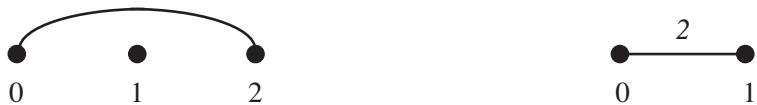} 
   \caption{The cyclops and stub templates.}
   \label{CyclStubtemp}
\end{figure}

\begin{prop} \label{monicN}
$N_{*}^{d,(G[k],\Delta[k])}$  is a monic polynomial in $k$ for sufficiently large $k$. Its degree
is the number of edges of $G$.
\end{prop}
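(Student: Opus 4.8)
The plan is to extract everything from the product formula (\ref{falling}) for $N_*^{d,(G,\Delta)}$, after recording how its two ingredients transform under the offset $G \mapsto G[k]$. First I would note the elementary bookkeeping: since $G[k]$ is $G$ translated rightward by $k$, an edge lying over $[i,i+1]$ in $G$ lies over $[i+k,i+k+1]$ in $G[k]$. Hence the edge-weight sums satisfy $w_{i+k}(G[k]) = w_i(G)$, and the distribution multiplicities satisfy $m_{i+k}(\Delta[k]) = m_i(\Delta)$, while both quantities vanish at any index not of the form $i+k$ with $i$ in the (finite) support of the original data.

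Next I would substitute these identities into (\ref{falling}) and re-index the product by the original index $i$. Writing $w_i$ and $m_i$ for the data of $(G,\Delta)$, the factor of $G[k]$ sitting at the shifted index $i+k$ becomes $\bigl(i+k-w_i+m_i\bigr)_{m_i}$, so that
\[
N_*^{d,(G[k],\Delta[k])} = \prod_i \bigl(\, i + k - w_i + m_i \,\bigr)_{m_i},
\]
the product ranging over the finitely many indices with $m_i > 0$. This step is valid precisely when $G[k]$ is allowable for $d$, so that (\ref{falling}) genuinely computes $N_*^{d,(G[k],\Delta[k])}$ rather than the default value $0$. Because $G$ is a fixed finite graph, criterion (3) of Definition \ref{allowdef} holds for all sufficiently large $k$ (as already observed just before the statement), and for such $k$ every argument of the falling factorials above is a positive integer; this is the one place where the hypothesis ``for sufficiently large $k$'' is actually used.

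Finally I would read off the polynomiality, monicity, and degree from the displayed product. By definition of the falling factorial, each factor $\bigl(i+k-w_i+m_i\bigr)_{m_i}$ is a product of $m_i$ linear polynomials in $k$, each of the form $k + (\text{constant})$ and hence monic of degree one; thus the factor is a monic polynomial in $k$ of degree $m_i$, and the whole product is a monic polynomial of degree $\sum_i m_i$. Since the distribution $\Delta$ assigns to each edge of $G$ exactly one interval, $\sum_i m_i$ equals the number of edges of $G$, yielding the claimed degree.

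I do not expect a serious obstacle: the content is just the translation-invariance of the formula (\ref{falling}) together with the monic, degree-one structure of each linear factor in $k$. The only points demanding care are getting the shift $i \mapsto i+k$ in the correct direction simultaneously for $w$ and for $m$ in the two substitution identities, and verifying that for large enough $k$ the pair $(G[k],\Delta[k])$ is allowable, so that the product formula is legitimately in force.
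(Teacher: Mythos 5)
Your proof is correct and takes essentially the same approach as the paper: the paper's entire proof is the observation that formula (\ref{falling}) applied to the offset pair gives $N_{*}^{d,(G[k],\Delta[k])}=\prod_{i}(k+i-w_i+m_i)_{m_i}$, from which monicity and the degree count are read off. You have simply made explicit the bookkeeping the paper leaves implicit---the shift identities for $w_i$ and $m_i$, the allowability of $G[k]$ for large $k$, and the identification $\sum_i m_i = \#\Edge(G)$.
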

\begin{proof}
By formula (\ref{falling}) we have
\[      N_{*}^{d,(G[k],\Delta[k])}=\prod_{i}(k+i-w_i+m_i)_{m_i}.   \]
\end{proof}

\section{Linearity} \label{laq}

\par
Let $G$ be a long-edge graph satisfying criterion (3) of Definition \ref{allowdef}; let $n$ be the number of edges of $G$ and
let $\Edge(G)$ denote the set of edges.  For each subset $E$ of $\Edge(G)$, consider the subgraph with these edges; for simplicity we also denote it by $E$. Note that any distribution $\Delta$ is inherited by $E$.  We now consider, for each $d$, the alternating sums
\begin{equation}\label{qg}
      Q^{d,G}=\frac{1}{\alpha(G)}\sum_{\SP}(-1)^{p-1}(p-1)!\prod_{E \in \SP}\left(\alpha(E) N^{d,E}\right)    
\end{equation}
and
\begin{equation} \label{qgdelta}
      Q^{d,(G,\Delta)}=\frac{1}{\alpha(G,\Delta)}\sum_{\SP}(-1)^{p-1}(p-1)!
      \prod_{E \in \SP}\left(\alpha(E,\Delta)N^{d,(E,\Delta)}\right),
\end{equation}
summing in both instances over all unordered partitions $\SP$ of  $\Edge(G)$, taking products over the blocks $E$ of $\SP$, and denoting by $p$ the number of blocks.  In view of Proposition \ref{monicN}, we know that $Q^{d,(G,\Delta)}$ is a polynomial whose degree is at most $n$.  In this section we show that, surprisingly, it is linear.

\par
The automorphisms make the formulas in (\ref{qg}) and (\ref{qgdelta}) look somewhat awkward, but if we use instead the automorphism- and multiplicity-free quantity
\[      Q_{*}^{d,(G,\Delta)}=\frac{\alpha(G,\Delta)}{\mu(G)}Q^{d,(G,\Delta)},   \]
then (\ref{qgdelta}) becomes
\begin{equation}   \label{qstar} 
Q_{*}^{d,(G,\Delta)}=\sum_{\SP}(-1)^{p-1}(p-1)!
      \prod_{E \in \SP}N_{*}^{d,(E,\Delta)}.
\end{equation}

\par
To provide some motivation for considering the particular alternating sums in (\ref{qg}) and (\ref{qgdelta}), we show how they allow us to refine the generating series (\ref{Ngen}) and (\ref{Qgen}).  The \emph{disjoint union} of the long-edge graphs $G_1, G_2, \dots$ is the graph $\sqcup G_i$ obtained by taking the disjoint union of their edge sets. Note that the cogenus $\delta(\sqcup G_i)$ is the sum $\sum \delta(G_i)$.  Introducing a formal indeterminate $x^G$ for each long-edge graph $G$, let
\begin{equation} \label{bseries}
\SNb(d)=\sum N^{d,G} x^G \qquad\text{and}\qquad  \SQb(d)=\log{\left(\SNb(d)\right)} =\sum Q^{d,G} x^G,  
\end{equation}
summing over all long-edge graphs $G$.  Here we take $\prod x^{G_i}$ to mean $x^{\sqcup G_i}$.  Equating the coefficients in (\ref{bseries}) yields (\ref{qg}).  
Theorem~\ref{FMrecast} tells us that 
G{\"o}ttsche's generating series $\SN$ 
can be recovered from 
from $\SNb$ by replacing each $x^G$ by $x^{\delta(G)}$.
Thus the same is true for their logarithms: $\SQ$
can be recovered from 
 $\SQb$ by the same replacement. This means that 
\[        Q^{d,\delta} = \sum_{G} Q^{d,G} ,    \]
summing over all long-edge graphs of cogenus $\delta$.
\par
We may refine further by taking into account the distributions: let 
\begin{equation} \label{dseries}
\SNbb(d)=\sum N^{d,(G,\Delta)} x^{(G,\Delta)} \quad\text{and}\quad  \SQbb(d)=\log{\left(\SNbb(d)\right)} =\sum Q^{d,(G,\Delta)} x^{(G,\Delta)}, 
\end{equation}
so that  (\ref{qgdelta}) is the result of equating coefficients. 
Since the generating series $\SNb$ 
can be recovered  
from $\SNbb$ by replacing each $x^{(G,\Delta)}$ by $x^G$,
the same replacement takes $\SQbb$ 
to $\SQb$. This means that 
\[        Q^{d,G} = \sum_{\Delta} Q^{d,(G,\Delta)},    \]
summing here over all possible distributions for $G$.


\begin{example} \label{QCalc}
We illustrate the calculation of $Q^{d,G}$ for the graph $G$ shown in Figure \ref{LTemplate}, assuming that the graph is allowable for $d$.  
(Explicitly, we assume that $k \geq 4$ and $d \geq k+1$).  Note that $w_k = 4$, $w_{k+1} = 2$, and $\mu(G) = 4$.  
There are three possible distributions of subdivision points, illustrated in Figure \ref{Subdivision}, with automorphisms as indicated there.  Thus
\[      Q^{d,G}=Q^{d,(G,\Delta_{1})}+Q^{d,(G,\Delta_{2})}+Q^{d,(G,\Delta_{3})}
=2Q_{*}^{d,(G,\Delta_{1})}+4Q_{*}^{d,(G,\Delta_{2})}+2Q_{*}^{d,(G,\Delta_{3})}.   \]
\begin{figure}[htbp]
   \centering
   \includegraphics{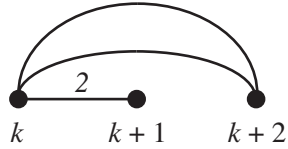}
   \caption{The graph $G$ of Example \ref{QCalc}.}
   \label{LTemplate}
\end{figure}
\begin{figure}[htbp]
   \centering
   \includegraphics{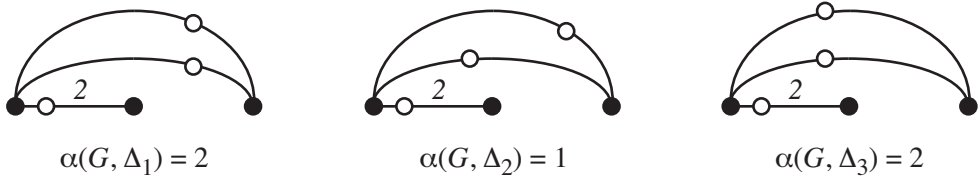}
   \caption{The three distributions of $G$.}
   \label{Subdivision}
\end{figure}
Labeling the three edges of $G$ by $A$, $B$, $C$ as in Figure \ref{TempDist1}, we have
\[   \begin{split}
Q_{*}^{d,(G,\Delta_{1})}
&=  N_{*}^{d,(G,\Delta_{1})} 
- N_{*}^{d,(A\cup B,\Delta_{1})}N_{*}^{d,(C,\Delta_{1})}  
- N_{*}^{d,(A\cup C,\Delta_{1})}N_{*}^{d,(B,\Delta_{1})} \\
&\quad\mbox{} - N_{*}^{d,(B\cup C,\Delta_{1})}N_{*}^{d,(A,\Delta_{1})} 
   +2N_{*}^{d,(A,\Delta_{1})}N_{*}^{d,(B,\Delta_{1})}N_{*}^{d,(C,\Delta_{1})} \\
&= (k-3)(k+1)_2  - (k+1)_2\cdot(k-1) - 2(k-2)(k+1)\cdot(k+1) \\
&\quad\mbox{} + 2(k+1)\cdot(k+1)\cdot(k-1) = 2k+2.
\end{split} \]
\begin{figure}[htbp]
   \centering
   \includegraphics{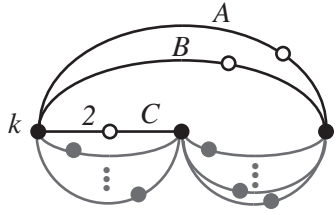}
   \caption{Figure for the computation of  $Q_{*}^{d,(G,\Delta_{1})}$.}
   \label{TempDist1}
\end{figure}
\noindent
Similarly, as illustrated by Figure \ref{TempDist2}, we have 
\begin{align*}
Q_{*}^{d,(G,\Delta_{2})} 
&= (k-2)_2 k - k(k-1)\cdot(k-1) - (k-2)(k+1)\cdot k - (k-1)_2\cdot(k+1)  \\
& \quad\mbox{}+ 2(k+1)\cdot k\cdot(k-1)  = 6k-2; \\
Q_{*}^{d,(G,\Delta_{3})} 
&=   (k-1)_3 - (k)_2\cdot(k-1) - 2(k-1)_2\cdot k + 2 k\cdot k \cdot(k-1)  \\
&= 6k-6.
\end{align*}
\noindent
Putting these results together, we find that $Q^{d,G}=40k-16$ when $k \geq 4$ (and $d$ is sufficiently large).
\begin{figure}[htbp]
   \centering
   \includegraphics{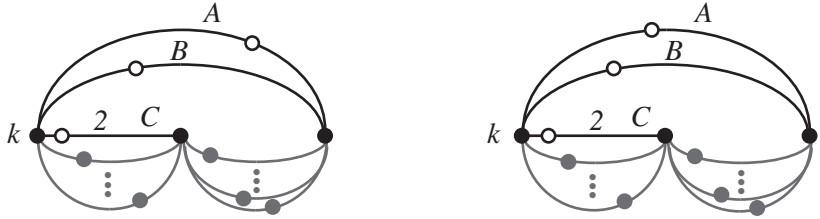}
   \caption{Diagrams for the calculations of $Q_{*}^{d,(G,\Delta_{2})}$ and 
   $Q_{*}^{d,(G,\Delta_{3})}$.}
   \label{TempDist2}
\end{figure}
\par
When $k$ is 0 or 1, then every term in the computation involves a subgraph that is not allowable, so that $Q^{d,G}=0$ in these cases.  When $k=3$, all proper subgraphs are allowable, so that only one term in the calculation is suppressed; here $Q^{d,G}=104$ (which agrees with the general formula, although this appears to be a coincidence).  When $k=2$, only two of the five partitions contribute to the calculation of $Q^{d,G}=76$.
\end{example}

\begin{thm} \label{linearity}
For each long-edge graph $G$ and each distribution $\Delta$, the polynomial 
$Q_{*}^{d,(G[k],\Delta[k])}$
 is linear in $k$ for $k$ sufficiently large.  Thus $Q^{d,G[k]}$ is likewise linear in $k$ for $k$ sufficiently large.
\end{thm}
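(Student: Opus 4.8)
The plan is to show that the polynomial $Q_{*}^{d,(G[k],\Delta[k])}$, which by \eqref{qstar} and Proposition \ref{monicN} agrees with a fixed polynomial in $k$ once $k$ is large enough that all relevant subgraphs are allowable, has degree at most one. The final assertion about $Q^{d,G[k]}$ then follows at once, since $Q^{d,G[k]}=\sum_{\Delta}\frac{\mu(G)}{\alpha(G,\Delta)}\,Q_{*}^{d,(G[k],\Delta[k])}$ is a finite sum of linear polynomials whose coefficients $\mu(G)/\alpha(G,\Delta)$ are independent of $k$.

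First I would repackage the building blocks. For a subset $E\subseteq\Edge(G)$ write $w_i^E$ and $m_i^E$ for the quantities $w_i$ and $m_i$ computed from the subgraph $E$, so that Proposition \ref{monicN} gives
\[ N_{*}^{d,(E[k],\Delta[k])}=\prod_i (k+i-w_i^E+m_i^E)_{m_i^E}=k^{|E|}\,P_E(1/k), \]
where $|E|$ is the number of edges of $E$ and $P_E(u)=\prod_i\prod_{r=1}^{m_i^E}\bigl(1+(i+r-w_i^E)\,u\bigr)$ satisfies $P_E(0)=1$ and $\deg_u P_E=|E|$. Since the coefficient $(-1)^{p-1}(p-1)!$ appearing in \eqref{qstar} is exactly the value at $(\SP,\widehat 1)$ of the Möbius function of the partition lattice, substituting the display into \eqref{qstar} yields $Q_{*}^{d,(G[k],\Delta[k])}=k^{n}R(1/k)$, where $n=|\Edge(G)|$ and
\[ R(u)=\sum_{\SP}(-1)^{p-1}(p-1)!\prod_{E\in\SP}P_E(u). \]
Because $\deg_u R\le n$, the desired linearity is equivalent to the single divisibility statement $u^{n-1}\mid R(u)$: this forces $R(u)=c_0u^{n-1}+c_1u^{n}$, whence $k^nR(1/k)=c_0k+c_1$ is linear.

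The heart of the matter is a connected-cluster expansion of $R$, and its crucial prerequisite is a factorization of $P_E$ over the individual edges of $E$. Fixing, for each interval $i$, a linear order on the set of edges whose subdivision point is distributed to $i$, I would assign the $r$-th factor $1+(i+r-w_i^E)u$ at interval $i$ to the $r$-th such edge present in $E$. A direct computation then shows $P_E(u)=\prod_{e\in E}\bigl(\phi_e-u\sum_{f\in E\setminus\{e\}}K_{ef}\bigr)$, where $\phi_e=P_{\{e\}}(u)=1+(D_e-w(e)+1)u$ (with $D_e$ the interval carrying $e$'s subdivision point) and $K_{ef}$ is an explicit integer depending only on the pair $(e,f)$, combining the weight of $f$ over the interval $D_e$ with an order-counting correction that is present only when $D_f=D_e$. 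Establishing this factorization when several edges share a distribution interval---so that the slot index $r$ itself varies with $E$---is the step I expect to be the main obstacle; once it is in place everything else is formal bookkeeping.

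Finally I would expand each factor as a choice, for every edge $e$, of either the \emph{inert} term $\phi_e$ or an \emph{arrow} $e\to f$ contributing $-uK_{ef}$. This writes $\prod_{E\in\SP}P_E(u)=\sum_{\sigma}W(\sigma)$, summed over arrow-configurations $\sigma$ on $\Edge(G)$ none of whose arrows cross a block of $\SP$, with $W(\sigma)$ the product of the chosen terms. Interchanging the two sums and applying the defining Möbius identity $\sum_{\SP\ge\pi(\sigma)}(-1)^{p-1}(p-1)!=\delta_{\pi(\sigma),\widehat 1}$, where $\pi(\sigma)$ is the partition into connected components of the undirected arrow graph of $\sigma$, collapses $R(u)$ to the sum of $W(\sigma)$ over those configurations whose arrow graph is \emph{connected} on all $n$ edges. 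Any connected graph on $n$ vertices has at least $n-1$ edges, each arrow carries exactly one factor of $u$, so every surviving $W(\sigma)$ is divisible by $u^{n-1}$. Hence $u^{n-1}\mid R(u)$, which is precisely what was needed.
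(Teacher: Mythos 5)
Your argument is correct, and it takes a genuinely different route from the paper's proof. First, the step you flag as the main obstacle does go through: setting $K_{ef}=w(f)\cdot\mathbf{1}[f\text{ lies over }D_e]-\mathbf{1}[D_f=D_e\text{ and }f\prec e]$, one gets $\sum_{f\in E\setminus\{e\}}K_{ef}=\bigl(w^E_{D_e}-w(e)\bigr)-\bigl(r_E(e)-1\bigr)$, where $r_E(e)$ is the rank of $e$ among the edges of $E$ distributed to $D_e$; hence each factor $\phi_e-u\sum_{f\in E\setminus\{e\}}K_{ef}$ collapses to $1+\bigl(D_e+r_E(e)-w^E_{D_e}\bigr)u$, which is exactly the slot factor of $P_E$, the rank correction adjusting itself automatically as $E$ varies. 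The paper proceeds quite differently: it never manipulates the falling factorials algebraically, but instead interprets $\prod_{E\in\SP}N_*^{d,(E,\Delta)}$ as counting functions $f$ from $\Edge(G)$ into the set $S$ of short edges of $\ext_d(\emptyset)$ (injective on each block and avoiding the subsets $S_e$ attached to that block), interchanges the two summations, and shows that the signed contribution $\Sigma(f)$ of each $f$ vanishes unless at most one value of $f$ lies in the residual set $S_0$; this vanishing rests on a deletion--contraction lemma for an auxiliary graph $H$ (Lemma \ref{graphlemma}, essentially the statement $C_H'(0)=0$ for disconnected $H$), plus a second induction to handle non-injective $f$. Linearity then follows from a soft growth bound: $|S_0|$ grows linearly in the offset $k$ while the sets $S_e$ stay fixed, so the number of contributing functions is linear in $k$, each contributes a bounded amount, and a polynomial bounded above in absolute value by a linear function of $k$ must be linear. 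Both proofs attach a graph to $\Edge(G)$ and use partition-lattice M\"obius-type identities, but the identities differ: the paper sums over partitions into independent sets of $H$, whereas you use the defining relation $\sum_{\SP\ge\pi}\mu(\SP,\hat 1)=\delta_{\pi,\hat 1}$ to retain only connected arrow configurations. The trade-off is that the paper's counting argument requires no auxiliary choices (your fixed slot orders) and no exact bookkeeping, but yields only the degree bound; your cluster expansion is heavier but identifies the surviving coefficients explicitly---the leading coefficient of $Q_*^{d,(G[k],\Delta[k])}$ is a signed sum of $\prod K_{ef}$ over arrow configurations whose underlying graph is a spanning tree---which bears directly on the question raised in the paper's introduction of computing the quadratic polynomials $Q^{d,\delta}$ by simple combinatorial means.
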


\begin{proof}
Again let $n$ be the number of edges of $G$.  For $n =1$, the statement is clear.  Thus we assume that $n \ge 2$.  Translating to the right if necessary, we may assume that $G$ satisfies criterion (3) of Definition \ref{allowdef}.  Fix a value of $d$ for which $G$ is allowable.  Consider the extended graph $\ext_{d}(\emptyset)$ associated to the edge-less graph: it has  
$i$ short edges over each interval $[i,i+1]$, as $i$ runs from $1$ to $d$.  (See Figure \ref{extempty}.)
\begin{figure}[htbp]
   \centering
   \includegraphics{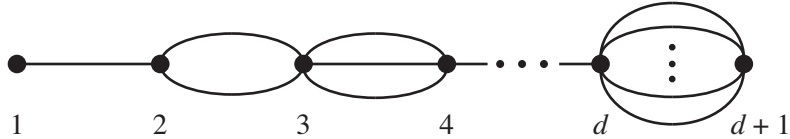}
   \caption{The extended graph $\ext_{d}(\emptyset)$ associated to the graph with no edges.}
   \label{extempty}
\end{figure}
Let $S$ be this set of short edges.  To each edge $e$ of $G$ we associate a subset $S_e \subset S$ consisting of $w(e)$ short edges over each interval covered by $e$, except over the interval $\Delta(e)$, where we take only $w(e)-1$ edges.  Note that over the interval $[i,i+1]$ we require a total of $w_i - m_i$ edges.  Thus, by criterion (3) of Definition \ref{allowdef}, these subsets can be chosen to be disjoint.  Let $S_0$ be $S \setminus \bigcup_{e \in G}S_e$.
Figure \ref{propex} presents an example.
\begin{figure}
   \centering
    \includegraphics{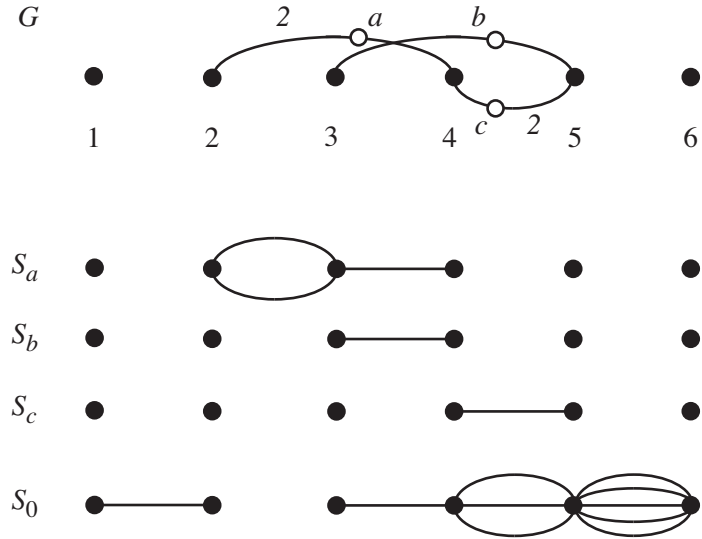}
   \caption{An example to illustrate the subsets of $S$ associated to the edges of a long-edge graph.  We are assuming $d = 5$.}
   \label{propex}
\end{figure}
\par
Then for any subset $E$ of $\Edge(G)$, the recipe for creating $\ext_{d}(E)$ amounts to this:  add to $E$ the edges of
\[     S \setminus \bigcup_{e \in E}S_e,    \]
minus one short edge over each interval $\Delta(e)$.  An ordering of $E'_d$ can be identified with an injection
\[     f:E \to S \setminus \bigcup_{e \in E}S_e   \]
for which $f(e)$ is one of the edges over $\Delta(e)$.  Thus for any partition $\SP$ of $\Edge(G)$, the product $\prod_{E \in \SP}N_*^{d,(E,\Delta)}$  counts functions $f$ from $\Edge(G)$ to $S$ having the following properties:
\begin{enumerate}
\item For each edge, $f(e)$ is one of the edges over $\Delta(e)$.
\item For each block $E$ of the partition, $f(E)$ is contained in $S \setminus \bigcup_{e \in E}S_e$.
\item On each block, $f$ is injective.
\end{enumerate}

\par
Applying this observation in (\ref{qstar}),
we can regard $Q_{*}^{d,(G,\Delta)}$ as a sum 
\begin{equation*} 
      Q_{*}^{d,(G,\Delta)}=\sum_{f}\sum_{\SP}(-1)^{p-1}(p-1)!
\end{equation*}
over functions satisfying the first condition, where in the inner sum we allow only those partitions that meet the other two conditions. We will call them \emph{compatible partitions}.  Letting $\Sigma(f)$ denote the contribution of $f$ to $Q_{*}^{d,(G,\Delta)}$,  note that $|\Sigma(f)|\leq C$, where
\[    
C=\sum_{p=1}^n(p-1)! \cdot (\text{\# $p$-block partitions of an $n$-element set}).
\]
(Recall that $n$ denotes the number of edges of $G$.)
\par
We first examine the case where $f$ is injective on the entire edge set of $G$.  Create a new \emph{auxiliary graph} $H$ as follows: take one vertex $\bar{e}$ for each edge $e$ of $G$; if $f(e_1)\in S_{e_2}$ then draw an edge between $\bar{e}_1$ and $\bar{e}_2$ (in particular if $f(e_1)\in S_{e_1}$, then draw a loop); replace any double edges by single edges.  By condition (2), $\SP$ is a compatible partition for $f$ if and only if no block of the corresponding vertex partition of $H$ contains two adjacent vertices; we say that $\SP$ is \emph{compatible} with $H$.  (Note the resemblance to the graph-theoretic notion of a coloring.  Also note that if $H$ has any loops, then no partition will be compatible.)  In Figure \ref{graphH}, we give an example to illustrate how $H$ is constructed. The graph depicted there has just two compatible partitions:  the fine partition and the partition $\{ \bar{a}, \bar{c} \} \cup \{ \bar{b} \}$.
\begin{lem} \label{graphlemma}
Suppose that $H$ is a graph on $n$ vertices with $n \ge 2$. If $H$ has at most $n-2$ edges, then
\[       \Sigma(H) := \sum_{\SP} (-1)^{p-1}(p - 1)!=0,     \]
where the sum is taken over all compatible unordered partitions of the vertex set of $H$, and $p$ is the number of blocks.
\end{lem}
\begin{figure}
   \centering
    \includegraphics{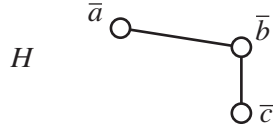}
\caption{This continues the example from Figure \ref{propex}, assuming that the function $f$ satisfies these conditions: $f(a)\in S_b$, $f(b)\in S_c$, $f(c)\in S_0$.  Note that Lemma \ref{graphlemma} does not apply to $H$ since $H$ has more than a single edge.  As $G$ is offset, however, only the set $S_0$ increases in cardinality and so the number of such functions only grows linearly with the offset $k$.}
   \label{graphH}
\end{figure}
\begin{proof}
If $H$ has any loops, then $\Sigma(H) = 0$.  If $H$ has no loops, then we use induction on the number of edges. If $H$ has no edges, then the equation expresses a standard identity of the Stirling numbers of the first and second kinds. (See \cite[Proposition 1.9.1]{Stanley}, recalling that the Stirling number of the first kind $s(p,1) = (-1)^{p-1}(p-1)!$.)  Otherwise chose an edge $e$. Let $H'$ be the graph obtained by omitting it, and let $H''$ be the graph obtained by identifying its two vertices $v$ and $w$ (and then removing the loop and any redundant edges).  Let $\SP$ be a vertex partition compatible with $H'$. It is compatible with $H$ if and only if $v$ and $w$ belong to different blocks of it.  If $v$ and $w$ belong to the same block of $\SP$, then we obtain a compatible vertex partition of $H''$, and, moreover, one obtains all compatible partitions of $H''$ in this way. Note that both $H'$ and $H''$ both have fewer edges than $H$. Thus
\[ 
\Sigma(H) = \Sigma(H') - \Sigma(H'') = 0.   
\]
\end{proof}

\begin{rem}
S.\ Chmutov has pointed out to us that the polynomial $\Sigma(H)$ in Lemma \ref{graphlemma} is the value at $p = 0$ of the derivative of the chromatic polynomial $C_H(p)$.  To see this, note first that for a graph $H$ with $k$ connected components, the chromatic polynomial is divisible by $p^k$.  Hence, $C_H'(0) = 0$ if $H$ is disconnected.  Moreover, with the graphs $H'$, $H''$ defined as in the proof of Lemma \ref{graphlemma}, we have the recurrence relation $C_H(p) = C_{H'}(p) - C_{H''}(p)$.  Thus the computation of $C_H'(0)$ for a connected graph $H$ reduces to the calculation of $C_G'(0)$ where the graph $G$ consists of a single point.  But $C_G(p) = p$, so that $C_G'(0) = 1$, which agrees with $\Sigma(G)$.
\end{rem}

\par
Returning to the proof of Theorem \ref{linearity}, note that for an injection $f$ we have $\Sigma(f)=\Sigma(H)$, where $H$ is the auxiliary graph.  Also note that the number of edges in $H$ is bounded above by $n$ minus the number of values of $f$ which lie in $S_0$.  Thus, by Lemma \ref{graphlemma}, we see that for an injection satisfying properties (1), (2), and (3) we have $\Sigma(f)=0$ except in those cases where at most one of the values of $f$ lies in $S_0$.  We claim that the same is true for any function satisfying properties (1), (2), and (3), and prove this claim by induction on the number of repeated values, by which we mean
\[        r=n-\#\operatorname{Im}(f).       \]
If $r=0$ then $f$ is injective. Otherwise there is a pair of edges $e_1$, $e_2$ of $G$ for which $f(e_1)=f(e_2)$. Define two new functions $f'$ and $f''$ as follows. Suppose that $f(e_1) \in S_{e}$ (where $e$ is either an edge of $G$ or the value $0$).
Define $f'$ to be the same as $f$ except that $f'(e_2)$ is redefined to be some other element of $S_{e}$ not in the image of $f$ (i.e., different from all other values). If there is no such unused element in $S_e$, we simply enlarge $S_e$ (and hence $S$) by throwing in one more element.  To define $f''$, let $\Edge(G)''$ be the set obtained from $\Edge(G)$ by identifying $e_1$ and $e_2$ to a single element $\star$; then $f$ factors through the quotient map $\Edge(G) \to \Edge(G)''$ followed by $f'':\Edge(G)'' \to S$. Let $S_{\star}=S_{e_1}\cup S_{e_2}$.  Note that for both $f'$ and $f''$ the value of $r$ has decreased.

\par
Now observe that any partition compatible with $f$ is likewise compatible with $f'$.  Going the other way, if $\SP$ is compatible with $f'$ then there are two possibilities:  (1) $e_1$ and $e_2$ belong to different blocks, so that $\SP$ is also compatible with $f$, or (2) $e_1$ and $e_2$ belong to the same block, so that $\SP$ comes from a partition of $\Edge(G)''$ compatible with $f''$. Thus 
\[ 
\Sigma(f) = \Sigma(f') - \Sigma(f'') = 0.   
\]
This completes the proof of the claim.
\par
Finally we note that, as the offset $k$ varies, 
the sets $S_e$ associated to the edges of $G[k]$ stay the same size, while the size of $S_0$ grows linearly. Thus the number of functions having at most one of their values in $S_0$ is bounded by a linear function of $k$.
The contribution $\Sigma(f)$ of each such function 
to $Q_{*}^{d,(G[k],\Delta[k])}$
is bounded by the constant $C$ which depends only
on the number of edges in $G$, and is thus independent of $k$.
 Thus the polynomial $Q_{*}^{d,(G[k],\Delta[k])}$ is linear in $k$.
\end{proof}

\section{Templates and Quadraticity of $Q^{d, \delta}$} \label{templates}

\par
We have already encountered examples of templates in Section \ref{leg}.  Now we provide the formal definition. It is inspired by \cite[Definition 5.6]{FM}, where the term template was coined.

\begin{defn} \label{tempdef}
The \emph{right end} of a long-edge graph $G$ is the smallest vertex for which all vertices to the right have degree $0$. A vertex between vertex $0$ and the right end is called an \emph{internal vertex}. An internal vertex is said to be \emph{covered} if there is an edge beginning to the left of it and ending to the right of it. A nonempty long-edge graph $G$ is called a \emph{template} if every internal vertex is covered. The offset graph $G[k]$ of a template $G$ is called an \emph{offset template}.
\end{defn}

\noindent
Figure \ref{tempexamples} shows an example of two long-edge graphs, one a template, and the other not.
Note, in particular, that in a template the vertex $0$ has nonzero degree (and thus a template is never an allowable graph).
\begin{figure}[htbp]
   \centering
   \includegraphics{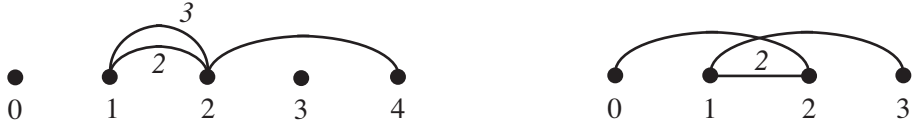} 
   \caption{The long-edge graph on the left is \emph{not} a template:  the internal vertices labeled $1$ and $2$ are not covered.   The long-edge graph on the right is a template.}
   \label{tempexamples}
\end{figure}

\begin{lem} \label{lemA}
Each long-edge graph can be expressed in a unique way as a disjoint union of offset templates.
\end{lem}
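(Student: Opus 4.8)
The plan is to produce the decomposition by cutting $G$ at its ``gaps'' and reading off the resulting pieces as offset templates, and then to argue that the cut locations are intrinsic to $G$, which forces uniqueness. Call a vertex $v$ \emph{uncovered} if no edge of $G$ spans it, that is, if no edge begins strictly to the left of $v$ and ends strictly to the right of $v$. The first observation is that no edge can straddle an uncovered vertex: with respect to each uncovered $v$, every edge lies entirely weakly to the left or weakly to the right of $v$. Consequently, cutting the vertex line at every uncovered vertex partitions $\Edge(G)$ into \emph{blocks}, a typical block consisting of all edges whose endpoints lie in a maximal run $[a,b]$ bounded by consecutive uncovered vertices $a$ and $b$ (so that $a+1,\dots,b-1$ are all covered). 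Since the vertices spanned by any single edge are consecutive and are automatically covered, this assignment of edges to blocks is well defined; an isolated weight-$\ge 2$, length-$1$ edge simply forms a singleton block.

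First I would verify existence, i.e.\ that each block, translated left by $a$, is a template. Its right end becomes $b-a$, and its internal vertices correspond to $a+1,\dots,b-1$, all of which are covered by construction. Moreover vertex $a$ carries an edge: the edge covering vertex $a+1$ (or, when the block has no internal vertex, the lone long edge of the block) must begin exactly at $a$, since beginning further left would span the uncovered vertex $a$. Hence the translated block is a nonempty template in the sense of Definition~\ref{tempdef}, and $G$ is exhibited as a disjoint union of offset templates.

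For uniqueness, suppose $G=\bigsqcup_m T_m[k_m]$ is any expression as a disjoint union of offset templates. The key point is that the uncovered vertices of $G$ depend only on $G$, and that no offset template can contain an uncovered vertex of $G$ in its interior: by Definition~\ref{tempdef} every internal vertex of $T_m[k_m]$ is spanned by an edge of $T_m[k_m]$, hence is covered in $G$. Therefore each $T_m[k_m]$ is confined to a single block, so every block is a disjoint union of the offset templates it contains. It remains to see that each block hosts exactly one template. Here I would use that the templates in the decomposition occupy non-overlapping vertex ranges: were a block to contain two of them, their supports would be separated by at least one vertex that no edge spans, producing an uncovered vertex strictly inside the block and contradicting the fact that every interior vertex of a block is covered. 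Thus blocks and offset templates coincide, and the decomposition must be the canonical one.

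The step I expect to be the main obstacle is precisely this last one: ruling out a finer splitting of a single gap-free block (for example, reading a doubled long edge as two coincident single edges). This is exactly where the maximality built into ``cutting only at genuine gaps'' is essential, and where one must make precise that distinct templates in a decomposition occupy ranges separated by an uncovered vertex; once that is granted, the interior-coverage property of blocks closes the argument. The remaining verifications---that translation preserves the long-edge axioms of Definition~\ref{longedgedef}, and that the boundary endpoints and length-$1$ edges are handled correctly---are routine.
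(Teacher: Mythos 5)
Your construction is the same as the paper's: the paper's entire proof is the single sentence ``Break the graph at each non-covered vertex,'' and your existence half carries this out correctly and in full detail (blocks between consecutive uncovered vertices translate to templates, and the left endpoint of each block must carry an edge, so each translated block really is a template in the sense of Definition~\ref{tempdef}).

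The gap is in the uniqueness half, and it is exactly the step you flagged as the main obstacle: you \emph{assume} that the offset templates appearing in an arbitrary decomposition occupy non-overlapping vertex ranges, separated by uncovered vertices, but this cannot be derived from the paper's definitions --- and in fact it is false. The disjoint union is defined as the union (multiset sum) of edge sets, with no requirement that the pieces occupy separated intervals, and under that literal reading uniqueness itself fails. Your own parenthetical worry is already a counterexample: the graph $G$ consisting of two parallel weight-$1$ edges joining vertices $1$ and $3$ is the offset $T[1]$ of the template $T$ with a doubled edge from $0$ to $2$ (its one internal vertex is covered), yet $G$ also equals $\text{Cyc}[1]\sqcup\text{Cyc}[1]$, a disjoint union of two offset cyclops templates. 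Likewise the graph with one edge joining $1$ and $3$ and another joining $2$ and $4$ is an offset of the template with edges $0$--$2$ and $1$--$3$, but it also equals $\text{Cyc}[1]\sqcup\text{Cyc}[2]$; here two pieces of a decomposition overlap without being separated by any uncovered vertex, which is precisely the configuration your argument declares impossible. The lemma is true only under the implicit convention that the constituent offset templates occupy intervals with pairwise disjoint interiors (equivalently, uniqueness is asserted only among such non-overlapping decompositions); granting that convention, your block-confinement argument does close, since the right end of one of two pieces inside a block would be an interior vertex of the block spanned by no edge, contradicting coverage. So your diagnosis of where the difficulty lies is exactly right --- note that the paper's one-line proof only exhibits the canonical decomposition and is silent on uniqueness --- but as written your uniqueness step is assumed rather than proved, and without the non-overlap convention it is unprovable.
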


\begin{proof}
Break the graph at each non-covered vertex.
\end{proof}

\begin{lem} \label{lemB}
Given $\delta > 0$, there are finitely many templates $\Gamma$ with cogenus $\delta$.
\end{lem}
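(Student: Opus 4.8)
The plan is to bound, in terms of $\delta$, every piece of data that can vary in a template: the number of edges, the individual edge lengths and weights, and the number of vertices. Once all of these are bounded, only finitely many combinatorial types remain, and the lemma follows.

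First I would exploit the cogenus directly. Since every edge of a long-edge graph is a long edge, we have $l(e)\cdot w(e)\ge 2$, so each summand $l(e)\cdot w(e)-1$ in $\delta(G)=\sum\left(l(e)\cdot w(e)-1\right)$ is at least $1$. Hence a template of cogenus $\delta$ has at most $\delta$ edges, and moreover each single edge satisfies $l(e)\cdot w(e)\le \delta+1$, which bounds its length and its weight separately by $\delta+1$. This already controls the edges themselves; what remains is to control \emph{where} they sit, i.e.\ to bound the position of the right end $R$ of the template.

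This is where the template hypothesis is essential, and I expect it to be the only genuinely nontrivial step. By definition every internal vertex $1,\dots,R-1$ is covered by some edge. I would count incidences between internal vertices and the edges covering them: from the vertex side this count is at least $R-1$, since each of the $R-1$ internal vertices is covered at least once, while from the edge side it equals $\sum_e\left(l(e)-1\right)$, since an edge from $i$ to $j$ covers exactly the $l(e)-1$ vertices strictly between its endpoints, all of which are internal. Comparing the two expressions and using $w(e)\ge 1$ gives
\[
R-1\ \le\ \sum_e\left(l(e)-1\right)\ \le\ \sum_e\left(l(e)\cdot w(e)-1\right)\ =\ \delta,
\]
so $R\le \delta+1$. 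Thus every vertex of the template lies in $\{0,1,\dots,\delta+1\}$.

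With these bounds in place the conclusion is immediate: each edge is a choice of two endpoints from the $\delta+2$ allowed vertices together with a weight at most $\delta+1$, so there are finitely many possible edges, and a template uses at most $\delta$ of them (as a multiset). Hence there are only finitely many templates of cogenus $\delta$. I would flag the bound $R\le\delta+1$ as the crux: it is the step that truly uses the covering condition rather than merely the cogenus, and it is precisely what rules out the infinitely many offsets $G[k]$ of a fixed template (which all share the same cogenus but are not themselves templates for $k>0$).
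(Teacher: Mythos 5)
Your proof is correct and follows essentially the same route as the paper, which bounds the number of edges by $\delta$ and notes an ``evident limit on the length and weight of each edge.'' Your explicit incidence-counting bound $R\le\delta+1$ on the right end simply fills in the detail the paper leaves as evident (namely, that the covering condition anchors all edges in a window of size bounded by $\delta$), so the two arguments coincide in substance.
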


\begin{proof}
Since $\delta(\Gamma) = \sum \left(l(e) \cdot w(e)-1\right)$, there are at most $\delta$ edges, and there is an evident limit on the length and weight of each edge.
\end{proof}

\begin{prop} \label{vanishprop}
If a long-edge graph $G$ is not an offset template, then $Q_{*}^{d,(G,\Delta)}=0$.  Hence $Q^{d,(G,\Delta)}=0$.
\end{prop}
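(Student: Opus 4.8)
The plan is to recognize $Q_{*}^{d,(G,\Delta)}$ as a cumulant-type quantity and to exploit the fact that $N_{*}^{d,(\cdot,\Delta)}$ becomes multiplicative once $G$ disconnects. First I would reduce to a two-part splitting: since $G$ is not an offset template, Lemma~\ref{lemA} writes $G=G_{1}\sqcup\cdots\sqcup G_{m}$ with $m\ge 2$, and collecting $A=\Edge(G_{1})$ and $B=\Edge(G_{2})\cup\cdots\cup\Edge(G_{m})$ gives a partition of $\Edge(G)$ into two nonempty sets whose edges lie over disjoint collections of unit intervals (the components of Lemma~\ref{lemA} are separated by non-covered vertices, so no interval is covered by edges of both parts). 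Because $Q^{d,(G,\Delta)}=\frac{\mu(G)}{\alpha(G,\Delta)}Q_{*}^{d,(G,\Delta)}$ and $\mu(G)\ge 1$, it suffices to prove $Q_{*}^{d,(G,\Delta)}=0$.

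The key structural input is multiplicativity. I would read off from formula (\ref{falling}) that, since no unit interval is covered by edges of both $A$ and $B$, for every subset $E\subseteq\Edge(G)$ the factors split and
\[
   N_{*}^{d,(E,\Delta)}=N_{*}^{d,(E\cap A,\Delta)}\cdot N_{*}^{d,(E\cap B,\Delta)} .
\]
(Since $G$ satisfies criterion (3) of Definition~\ref{allowdef}, for $d$ large every subgraph of $G$ is allowable, so formula (\ref{falling}) computes each $N_{*}^{d,(E,\Delta)}$; and $Q_{*}^{d,(G,\Delta)}$ is independent of $d$ in this range, so proving the vanishing for large $d$ suffices.) Next I would encode these numbers in the square-free generating function
\[
   M(t)=\sum_{E\subseteq\Edge(G)}N_{*}^{d,(E,\Delta)}\prod_{e\in E}t_{e},
\]
with one indeterminate $t_{e}$ per edge and $N_{*}^{d,(\emptyset,\Delta)}=1$. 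Expanding $\log M=\sum_{k\ge 1}\frac{(-1)^{k-1}}{k}(M-1)^{k}$, the coefficient of $\prod_{e\in\Edge(G)}t_{e}$ receives a contribution $k!$ from each unordered $k$-block partition of $\Edge(G)$, and this identifies the coefficient with the alternating sum (\ref{qstar}); that is, $Q_{*}^{d,(G,\Delta)}=\bigl[\prod_{e\in\Edge(G)}t_{e}\bigr]\log M(t)$. This is exactly the passage from $\SNbb$ to $\SQbb=\log\SNbb$ in (\ref{dseries}), localized at the single graph $G$.

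Finally I would combine the two. Multiplicativity factors the generating function as $M(t)=M_{A}(t)\,M_{B}(t)$, where $M_{A}$ involves only the variables $\{t_{e}:e\in A\}$ and $M_{B}$ only $\{t_{e}:e\in B\}$; hence $\log M=\log M_{A}+\log M_{B}$. Since $A$ and $B$ are both nonempty, neither summand contains the monomial $\prod_{e\in\Edge(G)}t_{e}$, so its coefficient vanishes and $Q_{*}^{d,(G,\Delta)}=0$, whence $Q^{d,(G,\Delta)}=0$ as well. The main obstacle is not any step individually but getting the exponential-formula bookkeeping exactly right: one must verify that the weights $(-1)^{p-1}(p-1)!$ in (\ref{qstar}) are precisely the coefficients produced by $\log M$ (equivalently, that they are the Möbius numbers $\mu(\SP,\hat 1)$ of the partition lattice, the $k!$ orderings of each $k$-block partition supplying the match), and that the disjoint-interval hypothesis genuinely splits every factor of (\ref{falling}). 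Both checks are routine, and once in place the vanishing is immediate.
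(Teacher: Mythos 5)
Your core mechanism is sound, and it is really the paper's own argument in cleaner packaging. The paper also splits $G$ at an uncovered internal vertex into two nonempty parts, uses blockwise multiplicativity of $N_{*}$ across the split, and then kills the alternating sum; where you invoke $\log(M_AM_B)=\log M_A+\log M_B$ and extract the squarefree coefficient (the mixed-cumulant vanishing, with the $k!$ orderings correctly matching the weights $(-1)^{p-1}(p-1)!$), the paper instead proves by an explicit sign-reversing bijection that $\sum_{q}(-1)^{a+b-q-1}(a+b-q-1)!\binom{a}{q}\binom{b}{q}\,q!=0$. Your route avoids that identity entirely, which is a genuine simplification of the combinatorial core.

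However, there is a real gap in your handling of allowability. You justify the factorization $N_{*}^{d,(E,\Delta)}=N_{*}^{d,(E\cap A,\Delta)}N_{*}^{d,(E\cap B,\Delta)}$ only via formula (\ref{falling}), which is valid only for allowable subgraphs, and you then restrict to $d$ large so that every subgraph is allowable, claiming that "$Q_{*}^{d,(G,\Delta)}$ is independent of $d$ in this range, so proving the vanishing for large $d$ suffices." That inference is invalid: $Q_{*}^{d,(G,\Delta)}$ is only \emph{eventually} constant in $d$, not independent of $d$ (already for a single long edge, $Q_{*}^{d,(G,\Delta)}=N_{*}^{d,(G,\Delta)}$ is $0$ when the edge is not allowable for $d$ and positive when it is), so vanishing for large $d$ says nothing about smaller $d$. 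The proposition carries no largeness hypothesis, and none can be tolerated: in the proof of quadraticity one fixes $d$ and needs $Q^{d,G}=0$ for \emph{every} non-offset-template $G$ of cogenus $\delta$, including graphs positioned near $d+1$ and graphs violating criterion (3) of Definition~\ref{allowdef} (which your argument silently excludes, even though such graphs can have nonzero $Q^{d,G}$ when they are offset templates, so they cannot simply be ignored). For a criterion-(3)-violating example such as two weight-$2$ edges over $[2,3]$ together with a distant cyclops, the two-stub block is non-allowable for every $d$, so there is no regime in which your appeal to (\ref{falling}) applies, yet the vanishing there is exactly what the quadraticity theorem consumes. The fix is the step the paper makes explicitly: a subset $E\subseteq\Edge(G)$ is allowable for $d$ if and only if both $E\cap A$ and $E\cap B$ are, because all three criteria are checked interval by interval and the two parts occupy disjoint intervals. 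Granting this, the factorization of $N_{*}$ holds for every $d$ and every $G$ — both sides vanish together in the non-allowable cases — and then your identity $M=M_AM_B$ and the log argument go through verbatim with no restriction on $d$.
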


\begin{proof}
Since $G$ is not an offset template, there must be an internal vertex $v$ that fails to be covered by an edge of $G$.  Thus $v$ breaks $G$ into two subgraphs $G_\text{left}$ and $G_\text{right}$, so that 
\[
N_{*}^{d,(G,\Delta)}  = N_{*}^{d,(G_\text{left},\Delta)} N_{*}^{d,(G_\text{right},\Delta)}.
\]
Given any partition $\SP$ of the edge set of $G$, we obtain partitions $\SP_\text{left}$ and $\SP_\text{right}$ of the edge sets of $G_\text{left}$ and $G_\text{right}$.  The blocks of $\SP_\text{left}$ are the nonempty subsets $E \cap \text{(edge set of $G_\text{left}$)}$, where $E$ is a block of $\SP$. We say that $\SP$ is \emph{consistent} with $\SP_\text{left}$ and $\SP_\text{right}$.  We have
\begin{equation} \label{Qbreak}
Q_{*}^{d,(G,\Delta)} = \sum_{\SP_\text{left}} \sum_{\SP_\text{right}} \sum_{\text{consistent $\SP$}} (-1)^{p-1}(p-1)! \prod_{E \in \SP}N_{*}^{d,(E,\Delta)}.
\end{equation}
\par
We call $\SP$ \emph{allowable for $d$} if every one of its blocks is allowable for $d$;  if $\SP$ is not allowable for $d$, then
\[
\prod_{E \in \SP}N_{*}^{d,(E,\Delta)}=0.
\]
Now note that $\SP$ is allowable for $d$ if and only if both $\SP_\text{left}$ and $\SP_\text{right}$ are allowable for $d$.
Thus in the sum of (\ref{Qbreak}) we need only consider the terms in which both $\SP_\text{left}$ and $\SP_\text{right}$
are allowable.
\par
Fixing $\SP_\text{left}$ and $\SP_\text{right}$, consider
\begin{equation*} 
\sum_{\text{consistent $\SP$}} (-1)^{p-1}(p-1)! \prod_{E \in \SP}N_{*}^{d,(E,\Delta)},
\end{equation*}
which is the constant
\[
\prod_{E_\text{left} \in \SP_\text{left}}N_{*}^{d,(E_\text{left},\Delta)} 
\prod_{E_\text{right} \in \SP_\text{right}}N_{*}^{d,(E_\text{right},\Delta)}
\]
times the alternating sum
\[
\sum_{\text{consistent $\SP$}} (-1)^{p-1}(p-1)! .
\]
Let $a$ and $b$ denote the numbers of blocks of $\SP_\text{left}$ and $\SP_\text{right}$, respectively, and set $q :=  a + b - p$.  Then the coefficient of $\prod_{\text{blocks $E$ of $\SP$}}N_{*}^{d,(E,\Delta)}$ is
\begin{equation} \begin{split} \label{identity}
\sum_{q=0}^{\min(a,b)}&(-1)^{a + b - q- 1}(a + b -q-1)! \cdot (\text{\# $p$-block  $\SP$'s consistent with $\SP_\text{left}$, $\SP_\text{right}$}) \\ \notag
&= \sum_{q=0}^{\min(a,b)}(-1)^{a + b -q-1}(a + b -q-1)!\binom{a}{q}\binom{b}{q}q!. 
\end{split} \end{equation}
We prove that this evaluates to zero. Consider the two sets $A = \{x_1,\dots, x_a \}$ and $B = \{y_1,\dots, y_b \}$, and pair $q$ elements from each.  This can be done in $\binom{a}{q}\binom{b}{q}q!$ ways.  Construct $(a+b-q)$ subsets of the disjoint union $A \sqcup B$, each of which is either a singleton or a pair of the form $\{x_i, y_j \}$ where $x_i \in A$, $y_j \in B$, and arrange them in order, always beginning with the subset containing $x_1$.  (Equivalently, arrange in order up to a cyclic permutation of the subsets.)  Then the number of such ordered subsets of $A \sqcup B$ is $(a + b -q-1)!\binom{a}{q}\binom{b}{q}q!$; call this set of arranged subsets $\calS$.  We define a bijection from $\calS$ to itself as follows.  Given an element of $\calS$, read it in order (with the subset containing $x_1$ always first).  Identify the first position where there is either a pair, or an element of $A$ that is immediately followed by an element of $B$.  In the first case, replace the pair $\{x_i, y_j \}$ with $\{x_i\}$, $\{y_j \}$; in the second case, replace $\{x_i\}$, $\{y_j \}$ with $\{x_i, y_j \}$.  
Note that this bijection changes the parity of $q$.
Thus
\[   \sum_{q \text{ even}} (-1)^{a + b -q-1}(a + b -q-1)!\binom{a}{q}\binom{b}{q}q!  = \sum_{q \text{ odd}} (-1)^{a + b -q-1}(a + b -q-1)!\binom{a}{q}\binom{b}{q}q!.   \]
\end{proof}

\begin{thm}
For each $\delta$, the polynomial $Q^{d,\delta}$ is quadratic  in $d$ for $d$ sufficiently large. 
\end{thm}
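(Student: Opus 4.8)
\section*{Proof proposal}

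The plan is to reduce the statement to a finite sum of discrete integrals of the linear polynomials furnished by Theorem~\ref{linearity}. I begin from the refinement $Q^{d,\delta}=\sum_{G}Q^{d,G}$, the sum running over long-edge graphs of cogenus $\delta$. By Proposition~\ref{vanishprop} only offset templates contribute, and by Lemmas~\ref{lemA} and \ref{lemB} each such graph is $\Gamma[k]$ for exactly one of the finitely many templates $\Gamma$ of cogenus $\delta$ and exactly one offset $k\ge 0$. Thus $Q^{d,\delta}=\sum_{\Gamma}\sum_{k\ge 0}Q^{d,\Gamma[k]}$, a finite sum for each fixed $d$, and it suffices to show that $\sum_{k\ge 0}Q^{d,\Gamma[k]}$ is quadratic in $d$ for each fixed template $\Gamma$.

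Fix $\Gamma$. The first point I would establish is that, for each fixed $k$, the quantity $Q^{d,\Gamma[k]}$ stabilizes to a value $\phi_\Gamma(k)$ independent of $d$ once $d$ is large: indeed, if $\Gamma[k]$ is allowable for $d$ then so is every subgraph (deleting edges only relaxes criterion (3) of Definition~\ref{allowdef}, while criteria (1) and (2) are inherited), so every factor $N^{d,E}$ in the defining sum~(\ref{qg}) is independent of $d$. Theorem~\ref{linearity} then tells us that $\phi_\Gamma(k)$ agrees with a linear polynomial $L_\Gamma(k)$ for all $k\ge k_1$, where $k_1$ depends only on $\Gamma$.

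The crux --- and the step I expect to require the most care --- is the behaviour at the right-hand boundary. For fixed large $d$ let $K(d)$ be the largest offset for which $\Gamma[k]$ is allowable; since criterion (3) is automatically satisfied once the edges lie over intervals $[i,i+1]$ with $i$ large, allowability near the top is controlled purely by fitting inside $[0,d+1]$, so $K(d)$ is a linear function of $d$. I claim that every term with $k>K(d)$ vanishes. For such $k$, which are large since $K(d)$ grows with $d$, criterion (3) still holds, so the only possible failure of allowability is that some edge $e$ of $\Gamma[k]$ protrudes beyond vertex $d+1$, or is a weight-$\ge 2$ edge incident to $d+1$; in either case the single edge $e$ renders non-allowable every block of every partition of $\Edge(\Gamma[k])$ that contains it, so every product in~(\ref{qg}) is zero and $Q^{d,\Gamma[k]}=0$. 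This is precisely where the right boundary differs from the left: at the left boundary (the finitely many small $k$ where criterion (3) fails for the whole graph) proper subgraphs may still be allowable and contribute, but for each such fixed $k$ the value is the $d$-independent number $\phi_\Gamma(k)$.

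Combining these observations, for $d$ large we have $Q^{d,\Gamma[k]}=\phi_\Gamma(k)$ for $k\le K(d)$ and $Q^{d,\Gamma[k]}=0$ for $k>K(d)$, whence $\sum_{k\ge 0}Q^{d,\Gamma[k]}=\sum_{k=0}^{K(d)}\phi_\Gamma(k)$. Splitting off the finitely many terms with $k<k_1$, which together form a $d$-independent constant, leaves $\sum_{k=k_1}^{K(d)}L_\Gamma(k)$; summing the linear polynomial $L_\Gamma$ over an interval whose right endpoint $K(d)$ is linear in $d$ yields a polynomial of degree $2$ in $d$. This is the promised discrete integral of a linear polynomial. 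Adding these finitely many quadratics over the templates $\Gamma$ of cogenus $\delta$ shows that $Q^{d,\delta}$ is quadratic in $d$ for $d$ sufficiently large. Once the right-boundary vanishing is in hand, the remaining summation is routine.
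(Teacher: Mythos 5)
Your proposal is correct and follows essentially the same route as the paper's own proof: decompose $Q^{d,\delta}$ over the finitely many templates of cogenus $\delta$ and their offsets (Lemma~\ref{lemB}, Proposition~\ref{vanishprop}), apply Theorem~\ref{linearity} to make the inner sum a discrete integral of a linear polynomial, and note that the upper summation limit is linear in $d$. The paper's version is much terser---it simply restricts the inner sum to allowable offsets with a lower limit fixed and an upper limit linear in $d$---whereas your treatment of the two boundaries (vanishing of $Q^{d,\Gamma[k]}$ for $k>K(d)$ because every partition has a block containing the offending edge, and the $d$-independent constants at small $k$ where only criterion (3) fails) spells out details the paper leaves implicit.
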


\begin{proof}
By Lemma \ref{lemB} and Proposition \ref{vanishprop}, we may write
\[       Q^{d,\delta}=\sum_{\Gamma} \sum_{k}Q^{d,\Gamma[k]},    \]
a sum over the finitely many templates $\Gamma$ of cogenus $\delta$ and over all $k$ for which $\Gamma[k]$ is allowable for $d$.  For each such template, as $d$ varies the inner sum begins at a fixed lower limit and ends at an upper limit which is linear in $d$.  Furthermore the terms are linear in $k$ for $k$ sufficiently large.  Thus each inner sum is quadratic in $d$ for $d$ sufficiently large, and the same is true of the whole sum.
\end{proof}

\section{From Tropical Curves to Long-edge Graphs, via Floor Diagrams} \label{tropical}

\par
In Section~\ref{leg} we defined long-edge graphs, and in Theorem~\ref{FMrecast} we asserted that one may compute the Severi degree by computing a certain sum over such graphs. Here we explain how these long-edge graphs arise, and explicate a proof of Theorem~\ref{FMrecast}.  Our route is through tropical geometry and the theory of floor diagrams, building on the work in \cite{BM} and \cite{FM}.  We assume a familiarity with the basic notions of tropical plane curves.  (See especially these two papers for treatments related to the present context.)  By Mikhalkin's Correspondence Theorem \cite[Theorem 1]{Mik}, the classical Severi degree $N^{d,\delta}$ is the same as its tropical counterpart.

\par
Let $\mathcal{T}$ be a tropical plane curve passing through a \emph{tropically generic} point configuration (see \cite[ Definition 4.7]{Mik}). We create an \emph{associated graph} (in fact a weighted
directed multigraph) in the following manner (see Figure \ref{tropfloor} for an example).  Define an \emph{elevator} of $\mathcal {T}$ to be any vertical edge, i.e., any edge parallel to the vector $(0,1)$. The \emph{multiplicity} of an elevator is inherited from the multiplicity of that edge in the tropical curve. A \emph{floor} of $\mathcal {T}$ is a connected component of the union of all nonvertical edges. Note that elevators may cross floors. We contract each floor to a point, creating the vertices of a graph. The directed edges of this graph correspond to the elevators, with their directions corresponding to the downward (i.e., $(0,-1)$-) direction of the elevators.
For a curve of degree $d$ there will be $d$ unbounded elevators, all of multiplicity $1$, that we make adjacent to one additional vertex.   Note that the \emph{divergence}
\[ 
\operatorname{div}{(v)} := 
\sum_{\substack{\text{outward edges} \\ \text{from }v}} w(e) 
- \sum_{\substack{\text{inward edges} \\ \text{to }v}} w(e)
\]
has value $1$ at each vertex $v$ except the additional vertex, where the value is $-d$.
If the tropical curve passes through a \emph{vertically stretched} point configuration (see \cite[Definition 3.4]{FM}) then what we have just defined is virtually the same as a \emph{floor diagram}, as defined in \cite[Section~1]{FM} (c.f.\ also \cite[Section~5.2]{BM});
the corresponding floor diagram simply omits the additional vertex and its $d$ adjacent edges and carries a linear order on the remaining vertices; see Figure \ref{tropfloor} again.
\begin{figure}[htbp]
   \centering
   \includegraphics{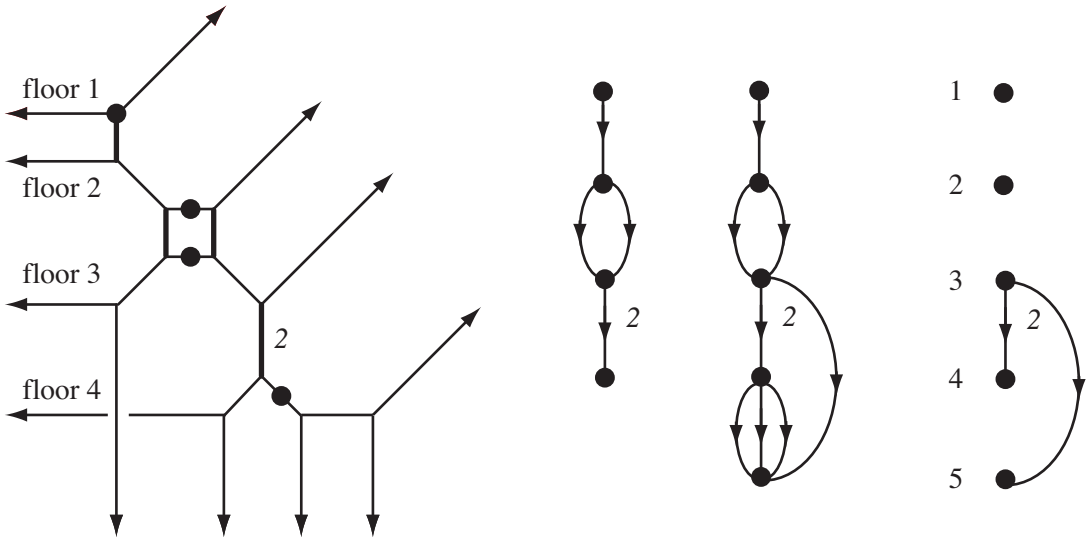}
   \caption{A plane tropical quartic, with its floor diagram, its associated graph, and its long-edge graph.}
   \label{tropfloor}
\end{figure}

\par
To obtain a long-edge graph from the associated graph, we would first like to order the vertices so that each edge goes from a smaller vertex to a larger one. In general this is impossible however, as shown in the example of Figure \ref{escher}.
Fomin and Mikhalkin \cite[Theorem 3.7]{FM} show (c.f.\ also \cite[Lemma~5.7]{BM}), however, that if the specified point conditions are vertically stretched, then, for each tropical curve of specified genus satisfying these point conditions, one indeed obtains a floor diagram (with edge directions respecting the linear order of the floors).  Thus, by adding the additional vertex (giving it the label $d+1$) and its $d$ incident edges, we obtain the associated graph.  Erasing all short edges (those of weight 1 and length 1), we then get a long-edge graph.  In the other direction, beginning with a long-edge graph, we can draw short edges so that
 $\operatorname{div}{(v)}  = 1$, and then erase vertex $d+1$ and its incident edges.
\begin{figure}[htbp]
   \centering
   \includegraphics{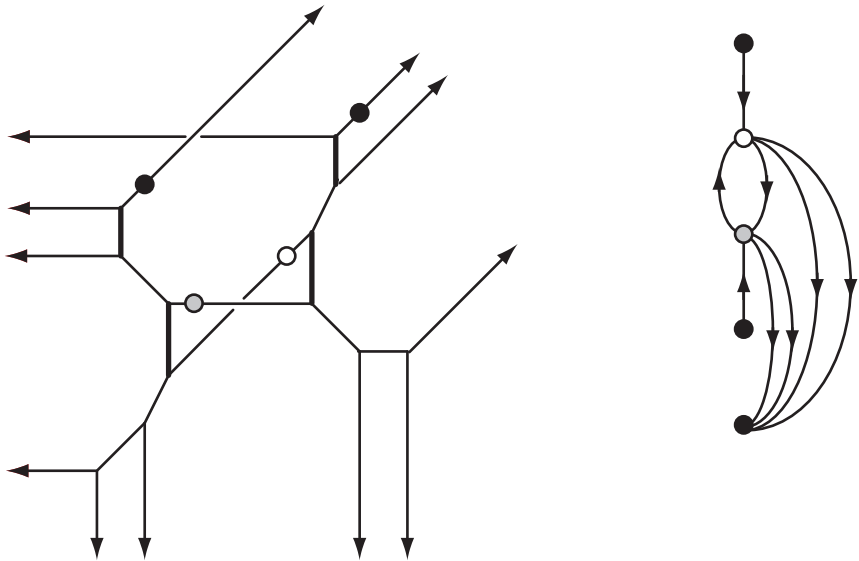}
   \caption{An ``Escher-like" tropical plane quartic with its associated graph. There is no consistent way to number the vertices.}
   \label{escher}
\end{figure}
\par
The \emph{cogenus} $\delta$ of a connected labeled floor diagram $\SD$
is $\delta=\frac{(d-1)(d-2)}{2} - g$,  where $g$ denotes the genus of its underlying graph; if $\SD$ is not connected, then
\[      \delta = \sum_j \delta_j + \sum_{j < j'}d_j d_{j'},     \]
where the $d_j$'s and $\delta_j$'s are the respective degrees (i.e., the number of vertices) and cogenera of the connected components.  The \emph{multiplicity} $\mu$ of $\SD$ is 
$\mu(\SD) = \prod_{\text{edges $e$}} (w(e))^2$. 
These definitions are compatible with the earlier definitions for long-edge graphs.
Now suppose that $G$ is the long-edge graph obtained from the labeled floor diagram $\SD$ by the process just described. Then a \emph{marking} of $\SD$, as defined in \cite{Block} and \cite {FM}, is equivalent to an ordering of $G'_{d}$, as defined in Section~\ref{leg}.  
Let $\nu(\SD)$ denote the number of equivalence classes of markings of $\SD$. (Two markings are \emph{equivalent} if they differ by a vertex and edge-weight preserving graph automorphism.)
\noindent
\begin{thm}[\cite{FM}, Theorem 1.6, Corollary 1.9]  \label{FMcount}
The Severi degrees are given by
\[   N^{d,\delta} = \sum \mu(\SD) \nu(\SD),   \]
where the sum is taken over all labeled floor diagrams (not necessarily connected) of degree $d$ and cogenus $\delta$. 
\end{thm}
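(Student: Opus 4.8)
The plan is to pass to the tropical side and exploit the combinatorial structure imposed by vertical stretching. By Mikhalkin's Correspondence Theorem \cite{Mik}, the classical Severi degree $N^{d,\delta}$ equals its tropical counterpart: the number of plane tropical curves of degree $d$ and genus $g=\frac{(d-1)(d-2)}{2}-\delta$ passing through a tropically generic configuration of $\frac{d(d+3)}{2}-\delta$ points, each counted with its tropical multiplicity. Since this count is independent of the generic configuration, I would compute it using a \emph{vertically stretched} configuration in the sense of \cite[Definition 3.4]{FM}; the point of stretching is to separate the combinatorial data into the ``horizontal'' information recorded by floors and the ``vertical'' information recorded by elevators.

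First I would make precise the passage from a tropical curve $\mathcal{T}$ through such a configuration to its associated graph, as sketched in Section~\ref{tropical}: contract each floor to a vertex and record each elevator as a directed, weighted edge oriented downward. The key structural input is that, for a vertically stretched configuration, the floors can be totally ordered so that every elevator runs from a lower floor to a higher one; this rules out the Escher-like phenomenon of Figure~\ref{escher} and is exactly \cite[Theorem 3.7]{FM} (compare \cite[Lemma 5.7]{BM}). Granting this, the associated graph becomes a genuine labeled floor diagram $\SD$, and the tropical multiplicity of $\mathcal{T}$ factors as the product $\mu(\SD)=\prod_e w(e)^2$ over its edges.

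Next I would account for the point conditions. Having fixed the combinatorial type of $\SD$ (equivalently, having fixed the floors and the elevators together with their weights and incidences), the remaining freedom is the placement of the points; under vertical stretching this placement data is recorded combinatorially by a \emph{marking} of $\SD$ in the sense of \cite{FM} and \cite{Block}. Counting markings up to the equivalence generated by weight-preserving automorphisms yields $\nu(\SD)$, so that the tropical curves of a given combinatorial type contribute $\mu(\SD)\,\nu(\SD)$ to the Severi degree. Summing over all labeled floor diagrams of degree $d$ and cogenus $\delta$—allowing disconnected diagrams so as to capture reducible curves—gives the asserted formula $N^{d,\delta}=\sum \mu(\SD)\,\nu(\SD)$.

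The main obstacle is the structural step in the second paragraph: proving that vertical stretching forces a consistent total order on the floors. This is where the hypothesis on the configuration does all the work, and where one must argue that a curve containing an inconsistent cycle of elevators (as in Figure~\ref{escher}) cannot pass through a sufficiently stretched configuration. A secondary technical point is to verify that the multiplicity and marking bookkeeping exactly matches the tropical multiplicity with which Mikhalkin's theorem counts curves; this amounts to checking that the local vertex multiplicities in the tropical count reassemble into the global product $\prod_e w(e)^2$ together with the combinatorial marking count, with no contributions lost or double-counted across the (possibly disconnected) diagram.
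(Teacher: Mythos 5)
This theorem is not actually proved in the paper---it is quoted with attribution from Fomin--Mikhalkin (\cite[Theorem 1.6, Corollary 1.9]{FM}), and Section~\ref{tropical} only supplies the dictionary (tropical curve $\to$ floor diagram $\to$ long-edge graph) showing it is equivalent to Theorem~\ref{FMrecast}. Your outline reconstructs precisely the route the paper describes and the original proof in \cite{FM} takes---Mikhalkin's Correspondence Theorem, vertically stretched configurations, the total ordering of floors via \cite[Theorem 3.7]{FM} (c.f.\ \cite[Lemma 5.7]{BM}), and the marking/multiplicity bookkeeping---so it is essentially the same approach, with the genuinely hard steps correctly identified as exactly the content of the cited results.
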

\noindent
This is the same as our Theorem~\ref{FMrecast}.



\bibliographystyle{nyjalpha}
\ifx\undefined\bysame
\newcommand{\bysame}{\leavevmode\hbox to3em{\hrulefill}\,}
\fi

\end{document}